\definecolor{mygray}{gray}{0.85}
\renewcommand{\leq}{\leqslant}
\def\subsection{\@startsection{subsection}{3}%
  \z@{.5\linespacing\@plus.7\linespacing}{.3\linespacing}%
  {\bfseries\centering}}
\def\subsubsection{\@startsection{subsubsection}{3}%
  \z@{.5\linespacing\@plus.7\linespacing}{.3\linespacing}%
  {\centering}}
\def\myfnt{\ifx\protect\@typeset@protect\expandafter\footnote\else\expandafter\@gobble\fi}
\newtheorem{theorem}{Theorem}
\newtheorem{corollary}[theorem]{Corollary}
\newtheorem{definition}[theorem]{Definition}
\newtheorem{lemma}[theorem]{Lemma}
\newtheorem{problem}[theorem]{Problem}
\newtheorem{convention}[theorem]{Convention}
\newtheorem{question}[theorem]{Question}
\newtheorem{remark}[theorem]{Remark}
\newcounter{claimcounter}
\numberwithin{claimcounter}{theorem}
\newcommand{\pureindep}[1][]{%
  \mathrel{
    \mathop{
      \vcenter{
        \hbox{\oalign{\noalign{\kern-.3ex}\hfil$\vert$\hfil\cr
              \noalign{\kern-.7ex}
              $\smile$\cr\noalign{\kern-.3ex}}}
      }
    }\displaylimits_{#1}
  }
}
\begin{document}

\begin{abstract} We construct a $\wedge$-homogeneous universal simple matroid of rank $3$, i.e. a countable simple rank~$3$ matroid $M_*$ which $\wedge$-embeds every finite simple rank $3$ matroid, and such that every isomorphism between finite $\wedge$-subgeometries of $M_*$ extends to an automorphism of $M_*$. We also construct a $\wedge$-homogeneous matroid $M_*(P)$ which is universal for the class of finite simple rank $3$ matroids omitting a given finite projective plane $P$.  We then prove that these structures are not $\aleph_0$-categorical, they have the independence property, they admit a stationary independence relation, and that their automorphism group embeds the symmetric group $Sym(\omega)$. Finally, we use the free projective extension $F(M_*)$ of $M_*$ to conclude the existence of a countable projective plane embedding all the finite simple matroids of rank $3$ and whose automorphism group contains $Sym(\omega)$, in fact we show that $Aut(F(M_*)) \cong Aut(M_*)$.
\end{abstract}

\title{A Universal Homogeneous Simple Matroid of Rank~$3$}
\thanks{Partially supported by European Research Council grant 338821.}

\author{Gianluca Paolini}
\address{Einstein Institute of Mathematics,  The Hebrew University of Jerusalem, Israel}

\date{\today}
\maketitle


\section{Introduction}

	A countably infinite structure $M$ is said to be {\em homogeneous} if every isomorphism between finitely generated substructures of $M$	extends to an automorphism of $M$. The study of homogeneous combinatorial structures such as graph, digraphs and hypergraphs is a very rich field of study (see e.g. \cite{cherlin1}, \cite{cherlin2}, \cite{jb1}, \cite{jb2}, \cite{lach1}, \cite{lach2}). On the other hand, {\em matroids} are objects of fundamental importance in combinatorial theory, but very little is known on homogeneous matroids. In this short note we propose a new approach to the study of homogeneous matroids, focusing on the case in which the matroid is of rank $3$ and simple. In this case the matroidal structure can be defined in a very simple manner as a $3$-hypergraph\footnote{To the reader familiar with matroid theory it will be clear that in $(V, R)$ the hyper-edges are nothing but the dependent sets of size $3$ of the unique simple matroid of rank $3$ coded by $(V, R)$.}, as follows:
	
	\begin{definition}\label{def_matroid} A {\em simple matroid\footnote{For a general introduction to matroid theory see e.g. the classical references \cite{rota} and \cite{kung}.} of rank $\leq 3$} is a $3$-hypergraph $(V, R)$ whose adjacency relation is irreflexive, symmetric \mbox{and satisfies the following {\em exchange axiom}:}
	\begin{enumerate}[(Ax)]
	\item if $R(a, b, c)$ and $R(a, b, d)$, then $\{a, b, c, d \}$ is an $R$-clique.
\end{enumerate}
We say that the matroid has rank $3$ if it contains three non-adjacent points.
\end{definition} 
	
	As well-known (see e.g. \cite[pg. 148]{kung}), simple matroids of rank $ \leq 3$ are in canonical correspondence (cf. Convention \ref{sloppy}) with certain incidence structures known as {\em linear spaces}:
	
	\begin{definition}\label{def_plane} A {\em linear space} is a system of points and lines satisfying:
	\begin{enumerate}[(A)]
	\item every pair of distinct points determines a unique line;
	\item every pair of distinct lines intersects in at most one point;
	\item every line contains at least two points.
\end{enumerate}
\end{definition}

	In \cite{devillers} Devellers provides a complete classification of the countable homogeneous linear spaces. In this work it is shown that (as formulated) the theory is very poor, and in fact the only infinite homogeneous linear space is the trivial one, i.e. infinitely many points and infinitely many lines incident with exactly two points.
	
	This situation is reflected in the context of matroid theory with the well-known observation (see e.g. \cite[Example 7.2.3]{oxley}) that the class of finite simple matroids of rank $3$ does not have the amalgamation property, and so the construction of a homogeneous (with respect to the notion of subgeometry) simple matroid of rank $3$ containing all the finite simple matroids of rank $3$ as subgeometries is hopeless.  
	
	One might wonder if this is all there is to it, and no further mathematical theory is possible. In this short note we give evidence that this is not the case, and that there might be a very interesting combinatorial theory for homogeneous matroids, if the problem (viz. choice of language) is correctly formulated.
	
	The crucial observation that underlies our approach is that (with respect to questions of homogeneity) the choice of substructure that we are considering is {\em too weak}, and does not take into account enough of the geometric structure encoded by these objects, i.e. their associated {\em geometric lattices}\footnote{A geometric lattice is a semi-modular point lattice without infinite chains. For more on this see e.g. \cite[Section 2]{paolini&hyttinen}, \cite[Chapter 2]{rota} and \cite{kung}.}. This inspires:
	
	\begin{definition}\label{def_geom_lat} Let $P$ be a linear space (cf. Definition \ref{def_plane}). On $P$ we define two partial functions $p_1 \vee p_2$ and $\ell_1 \wedge \ell_2$ denoting, respectively, the unique line passing through the points $p_1$ and $p_2$, and the unique point $p$ at the intersection of the lines $\ell_1$ and $\ell_2$, if such a point exists, and $0$ otherwise (where $0$ is a new symbol). If we extend $P$ to $\hat{P}$ adding a largest element $1$ and a smallest element $0$ and we extend the interpretation of $\vee$ and $\wedge$ in the obvious way, then the structure $(\hat{P}, \vee, \wedge, 0, 1)$ is a so-called geometric lattice. For details on this \mbox{see \cite[Chapter 2]{rota} or \cite[pg. 148]{kung}.}
\end{definition}

	\begin{convention}\label{sloppy} When convenient, we will be sloppy in distinguishing between a simple rank~$3$ matroid and it associated linear space/geometric lattice (cf. Definition~\ref{def_geom_lat}). This is justified by the following canonical correspondence between the two classes of structures. Given a linear space $P$ consider the simple rank $3$ matroid $M_P$ whose dependent sets of size $3$ are the triples of collinear points of $P$. Given a simple rank~$3$ matroid $(V, R)$ consider the linear space $P_M$ whose points $p$ are the elements of $V$ and whose lines $\ell$ are the sets of the form $\{a, b \} \cup \{ c \in V : R(a, b, c) \}$, together with the incidence relation  $p \in \ell$. Also, we will use freely the partial functions $\vee$ and $\wedge$ introduced in Definition \ref{def_geom_lat} in the context of linear spaces.
\end{convention}
	
	\begin{definition}\label{def_wedge_matroid} A {\em simple $\wedge$-matroid of rank $ \leq 3$} is a structure $M = (V, R, \wedge)$ such that $(V, R)$ is a simple matroid of rank $ \leq 3$ (cf. Definition \ref{def_matroid}) and $\wedge$ is a $4$-ary function defined as follows\footnote{Clearly, in the definition of $\wedge(a, b, c, d)$, the only case in which we are interested is the first case of the disjunction, i.e. when $a \vee b$ and $c \vee d$ are two distinct lines intersecting in a fifth point $p$, in which case the vale of $\wedge(a, b, c, d)$ is indeed $p$. The way the definition of $\wedge(a, b, c, d)$ is written is just a technical way to express this natural condition.} (cf. Definition \ref{def_geom_lat}):
	$$\wedge_M(a, b, c, d) = \begin{cases} (a \vee b) \wedge (c \vee d) \;\; \text{ if } (a \vee b) \wedge (c \vee d) \notin \{ 0 , a, b, c, d, a \vee b, c \vee d \},\\ a \;\;\;\;\;\;\;\;\;\;\;\;\;\;\;\;\;\;\;\;\;\;\;\;\;\; \text{otherwise}.
	\end{cases}$$
\end{definition} 

	In this study we will see that with respect to the new notion of substructure introduced in Definition \ref{def_wedge_matroid} there is hope for a rich mathematical theory, which is potentially analogous to the situation for homogeneous graphs (see e.g. \cite{lach1}). In fact, we prove:

	\begin{theorem}\label{th_construction} There exists a homogeneous simple rank $3$ $\wedge$-matroid $M_*$ which is universal for the class of finite simple $\wedge$-matroids of rank $\leq 3$.
\end{theorem}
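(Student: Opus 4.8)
The plan is to realize $M_*$ as the Fraïssé limit of the class $\mathcal{K}$ of all finite simple $\wedge$-matroids of rank $\leq 3$ (Definition \ref{def_wedge_matroid}), where the ambient notion of embedding is the model-theoretic one: a $\wedge$-embedding is an injection preserving $R$ in both directions and commuting with the $4$-ary function $\wedge$. The reason for working in the expanded language $\{R, \wedge\}$ rather than with bare linear spaces is precisely that it repairs amalgamation, which fails for the reduct class of simple rank $3$ matroids (as recalled in the introduction via \cite[Example 7.2.3]{oxley}). Once the hereditary, joint embedding, and amalgamation properties are verified for $\mathcal{K}$, together with the trivial observation that $\mathcal{K}$ has only countably many isomorphism types, the existence and uniqueness of a countable homogeneous $M_*$ with $\mathrm{Age}(M_*) = \mathcal{K}$ is the classical theorem of Fraïssé, and universality is immediate since $\mathrm{Age}(M_*)$ is all of $\mathcal{K}$. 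That $M_*$ has rank exactly $3$ follows because it $\wedge$-embeds the rank $3$ members of $\mathcal{K}$, while every finite $\wedge$-subgeometry has rank $\leq 3$.

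For the hereditary property I would check that a subset $A$ of $M \in \mathcal{K}$ closed under $\wedge_M$, with the induced $R$ and $\wedge$, is again in $\mathcal{K}$. The exchange axiom (Ax) and the linear space axioms (Definition \ref{def_plane}) are inherited by any induced sub-$3$-hypergraph, so the only real content is that the induced $\wedge$ coincides with the $\wedge$ computed intrinsically from the trace lines of $A$ (in the sense of Convention \ref{sloppy} and Definition \ref{def_wedge_matroid}). This is exactly what closure under $\wedge_M$ buys: if two $A$-lines cross at a genuine fifth point of $M$, that point is a value of $\wedge_M$ on four points of $A$ and hence lies in $A$; conversely no spurious intersection is manufactured by passing to the trace, so the lattice operations of $A$ agree with those of $M$.

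The heart of the argument, and the step I expect to be the main obstacle, is amalgamation. Given $A, B, C \in \mathcal{K}$ with $A$ a common $\wedge$-subgeometry of $B$ and $C$, I would build the \emph{free} amalgam $D$ on the pushout point set $V_B \cup_{V_A} V_C$ by declaring collinear exactly those triples already collinear in $B$ or in $C$, together with those lying on a common $A$-line after extending each $A$-line (with at least two points in $A$) to the union of its $B$- and $C$-extensions; every remaining cross pair $\{b,c\}$ with $b \in V_B \setminus V_A$ and $c \in V_C \setminus V_A$ spans a two-point line, and $\wedge_D$ is then dictated by Definition \ref{def_wedge_matroid} from the resulting linear space. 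The delicate verification is that $D$ is a genuine linear space — that two distinct lines still meet in \emph{at most} one point — and that the inclusions of $B$ and $C$ remain $\wedge$-embeddings. This is where the expansion by $\wedge$ does the essential work: the obstruction in the bare setting is a pair of lines parallel in $A$ whose extensions are forced to cross on the $B$-side at one point and on the $C$-side at a different point, producing two lines through two points of $D$. Closure under $\wedge$ forbids exactly this, since two $A$-lines failing to meet in $A$ cannot acquire a crossing point in $B \setminus A$ or in $C \setminus A$: such a point would be a value of $\wedge_B$ (resp. $\wedge_C$) on four points of $A$ and would therefore already lie in $A$. Hence parallelism of $A$-lines is preserved on both sides, every crossing of an extended $A$-line with another is pinned to the unique common point already present in $A$, and $D$ is a well-defined object of $\mathcal{K}$ amalgamating $B$ and $C$ over $A$. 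Joint embedding is then the special case of amalgamation over the empty (or one-point) $\wedge$-matroid, realized concretely by the disjoint union in which all cross pairs span two-point lines.
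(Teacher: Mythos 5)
Your proposal is correct and follows essentially the same route as the paper: both realize $M_*$ as the Fra\"iss\'e limit of the class of finite simple $\wedge$-matroids of rank $\leq 3$, with universality and homogeneity coming directly from Fra\"iss\'e's theorem. The free amalgam you construct (gluing $B$ and $C$ along the extended $A$-lines, with all remaining cross pairs spanning two-point lines, and parallelism of $A$-lines preserved by $\wedge$-closure) is exactly the canonical amalgam $M_1 \oplus_{M_0} M_2$ that the paper records in Remark \ref{remark_canonical_amalgam}; the only difference is that you verify amalgamation directly, whereas the paper cites it from \cite[Theorem 4.2]{paolini&hyttinen}.
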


\begin{theorem}\label{th_construction_omitting} Let $P$ be a finite projective plane, and $M_P$ the corresponding simple rank~$3$ matroid. Then there exists a homogeneous simple $\wedge$-matroid $M_*(P)$ which is universal for the class of finite simple $\wedge$-matroids of rank $\leq 3$ omitting\footnote{By this we mean that there is no injective map $f: M_P \rightarrow N$ such that $M_P \cong f(M_P)$.} $M_P$.
\end{theorem}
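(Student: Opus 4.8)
The plan is to obtain $M_*(P)$ as the Fraïssé limit of the class $\mathcal{K}(P)$ of all finite simple $\wedge$-matroids of rank $\leq 3$ that omit $M_P$, reusing the amalgamation machinery already developed for Theorem \ref{th_construction}. Since $\mathcal{K}(P)$ is a subclass of the class $\mathcal{K}$ of all finite simple $\wedge$-matroids of rank $\leq 3$ whose Fraïssé limit is $M_*$, it suffices to check that $\mathcal{K}(P)$ is again a Fraïssé class, i.e. that it has the hereditary property (HP), the joint embedding property (JEP) and the amalgamation property (AP), together with countably many isomorphism types (the latter being clear, as $\mathcal{K}(P)$ contains only finitely many structures of each finite size). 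The HP is immediate: if $M$ omits $M_P$ and $N$ is a $\wedge$-submatroid of $M$, then any $\wedge$-embedding of $M_P$ into $N$ would compose with the inclusion to embed $M_P$ into $M$, so $N$ omits $M_P$ as well.

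The whole matter therefore reduces to JEP and AP, and both follow at once from the single statement that the canonical (free) amalgam used in the proof of Theorem \ref{th_construction} preserves omission of $M_P$. Let $A, B \in \mathcal{K}(P)$ with common $\wedge$-submatroid $C$, and let $D$ be that canonical amalgam (for JEP one takes $C$ empty or a single common point). The key input is the characterization of collinearity in $D$ established in the proof of Theorem \ref{th_construction}: every line of $D$ containing at least three points is already a line of $A$ or a line of $B$; equivalently, three points of $D$ are collinear only if they all lie in $A$ or all lie in $B$. In particular no line of $D$ with at least three points can simultaneously meet $A \setminus C$ and $B \setminus C$.

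I would then argue that any $\wedge$-embedding $\phi : M_P \to D$ must have image contained in $A$ or in $B$. Suppose not; then $\phi(M_P)$ contains a point $\phi(a) \in A \setminus C$ and a point $\phi(b) \in B \setminus C$, with necessarily $a \neq b$. Since $M_P$ is a genuine finite projective plane it has order $\geq 2$, so every line has at least three points and any two distinct points are joined by a line; consider the line $\ell = a \vee b$ of $M_P$. Then $\phi(\ell)$ is a collinear subset of $D$ with at least three points meeting both $A \setminus C$ and $B \setminus C$, contradicting the characterization of the lines of $D$ above. Hence $\phi(M_P) \subseteq A$ or $\phi(M_P) \subseteq B$, so $M_P$ embeds into $A$ or into $B$, contradicting $A, B \in \mathcal{K}(P)$. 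Therefore $D$ omits $M_P$, i.e. $D \in \mathcal{K}(P)$, which yields both AP and JEP for $\mathcal{K}(P)$.

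Finally, $\mathcal{K}(P)$ being a Fraïssé class with countably many isomorphism types, the standard Fraïssé construction produces a unique countable homogeneous $\wedge$-matroid $M_*(P)$ with $\mathrm{Age}(M_*(P)) = \mathcal{K}(P)$; these two facts are exactly the assertions that $M_*(P)$ is homogeneous and universal for the finite simple $\wedge$-matroids of rank $\leq 3$ omitting $M_P$. The only real obstacle lies in the second paragraph: one must be sure that the amalgam imported from Theorem \ref{th_construction} really does add no unexpected collinearities or intersection points, so that its lines of size $\geq 3$ remain confined to a single factor. This is precisely where the passage from ordinary subgeometries to $\wedge$-substructures (Definition \ref{def_wedge_matroid}) is used, since it is the recording of the intersection data by $\wedge$ that prevents lines of $A$ and $B$ from being forced to meet outside $C$. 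Granting that characterization, the omission argument above is purely combinatorial and presents no further difficulty.
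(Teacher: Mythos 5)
Your overall plan (realize $M_*(P)$ as the Fra\"iss\'e limit of the class omitting $M_P$, and reduce everything to showing that the canonical amalgam used for Theorem \ref{th_construction} preserves omission of $M_P$) is the same as the paper's, which proves this as Lemma \ref{theorem_fraisse_omitting}. However, your key input is false, and this is a genuine gap. You assert that in the canonical amalgam $D = A \oplus_C B$ ``three points of $D$ are collinear only if they all lie in $A$ or all lie in $B$.'' That is not how the amalgam of \cite[Theorem 4.2]{paolini&hyttinen} works: by Remark \ref{remark_canonical_amalgam}(2), $R^{D}$ contains, besides $R^{A} \cup R^{B}$, all triples lying on a common line spanned by \emph{two points of $C$}. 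These extra collinearities are not optional. If $a', b' \in C$, if $x \in A \setminus C$ lies on $a' \vee b'$ in $A$, and if $y \in B \setminus C$ lies on $a' \vee b'$ in $B$, then $R(a', b', x)$ and $R(a', b', y)$ force $\{a', b', x, y\}$ to be an $R$-clique by the exchange axiom (Ax) of Definition \ref{def_matroid}; the ``completely free'' amalgam your characterization describes would simply not be a matroid. Consequently your one-line contradiction evaporates: the line of $D$ joining $\phi(a) \in A \setminus C$ to $\phi(b) \in B \setminus C$ may well carry three or more points, namely exactly when it is a line through two points of $C$.

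The conclusion you want (that the image of a projective plane must be contained in $A$ or in $B$, hence $D$ omits $M_P$) is true, but it needs the finer facts the paper records in the proof of Lemma \ref{theorem_fraisse_omitting}: (i) a line of $A$ (or of $B$) \emph{not} spanned by two points of $C$ acquires no new points in $D$; (ii) a line of $D$ meeting each of $A$ and $B$ in at most one point has exactly two points. So a line of $D$ with at least three points is contained in $A$, contained in $B$, or spanned by two points of $C$; and one also needs the observation that two \emph{distinct} lines of $D$ each spanned by two points of $C$ can meet only in a point of $C$ --- this is where $\wedge$-closedness of $C$ inside $A$ and inside $B$ is used, since the intersection of two such lines computed in $A$ (or in $B$) is a value of $\wedge$ applied to points of $C$. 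Granting this, one argues for instance as follows: if $\phi(M_P)$ met both $A \setminus C$ and $B \setminus C$, say in $p$ and $q$, then any two distinct lines of the plane through $p$ which contain points of $B \setminus C$ (or whose remaining points all lie in $C$) would extend to two distinct $C$-spanned lines of $D$ meeting at $p \notin C$, which is impossible; pushing this through shows every point of $\phi(M_P)$ off the line $p \vee q$ would have to lie in $C$, and then a line from such a point to $p$ again yields two $C$-spanned lines meeting at $p$, a contradiction. Your closing remark that the $\wedge$-structure ``prevents lines of $A$ and $B$ from being forced to meet outside $C$'' correctly identifies the relevant mechanism, but the characterization you extract from it is strictly stronger than what is true, and the actual omission argument is not the purely formal one you give.
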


	It might be argued that in the context of simple rank $3$ matroids the homogeneous structure of Theorem \ref{th_construction} plays the role played by the random graph \cite{rado} for the class of finite graphs, while the homogeneous structure of Theorem \ref{th_construction} plays the role played by the random $K_n$-free\footnote{$K_n$ denotes the complete graph on $n$ vertices.} graph \cite{henson_graphs} for the class of finite graph omitting $K_n$.
	
		We then prove several facts of interest on the automorphism groups of the homogeneous structures from Theorems \ref{th_construction} and \ref{th_construction_omitting}.

	\begin{theorem}\label{th_properties} Let $M_*$ be as in Theorem \ref{th_construction} or Theorem \ref{th_construction_omitting}. Then:
	\begin{enumerate}[(1)]
	\item $M_*$ is not $\aleph_0$-categorical;
	\item $M_*$ has the independence property;
	\item $M_*$ admits a stationary independence relation;
    \item $Aut(M_*)$ embeds the symmetric group $Sym(\omega)$;
    \item if the age of $M_*$ has the extension property for partial automorphisms, then $Aut(M_*)$ has ample generics, and in particular it has the small index property.
\end{enumerate}	
\end{theorem}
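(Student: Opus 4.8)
For part (1), the plan is to apply the Ryll--Nardzewski theorem: it suffices to produce a single $n$ for which $\mathrm{Aut}(M_*)$ has infinitely many orbits on $n$-tuples. I would take $n=4$ and use the Desarguesian planes $\mathrm{PG}(2,p)$, $p$ prime. Each $\mathrm{PG}(2,p)$ is a finite simple rank-$3$ matroid lying in the age (for $M_*(P)$ one discards at most the finitely many primes with $\mathrm{PG}(2,p)\cong P$, using that a prime-order Desarguesian plane has no proper projective subplane), so by universality it $\wedge$-embeds into $M_*$. Since any two lines of a projective plane already meet inside it and two lines of $M_*$ meet in at most one point, the image is $\wedge$-closed in $M_*$; and it is generated as a $\wedge$-closed set by any nondegenerate quadrangle, because the prime field is generated by any projective frame. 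Hence a generating $4$-tuple of $\mathrm{PG}(2,p)$ has $\wedge$-closure of cardinality $p^2+p+1$, and two $4$-tuples whose closures have different finite size cannot lie in a common orbit. Letting $p$ vary gives infinitely many orbits on $M_*^4$, so $M_*$ is not $\aleph_0$-categorical. I expect no serious obstacle here beyond checking that a quadrangle generates the whole prime plane.

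For part (2), recall that IP is equivalent to the existence of a formula $\phi(\bar x;\bar y)$ of infinite VC-dimension, i.e. one shattering arbitrarily large finite sets. The first thing to note is that the naive incidence formula ``$x$ lies on the line $\bar y$'' cannot witness IP: two points lie on at most one common line, so every such family is a linear hypergraph and has VC-dimension $\le 2$. The plan is therefore to use a formula whose fibres are not confined to a single line --- either a quantifier-free formula built by nesting $\wedge$ (so that the fibre is a union of several moving intersection loci), or an existential ``connector'' formula $\phi(x;y)=\exists \bar z\,\psi$. Using universality, I reduce IP to a purely combinatorial task: for each $n$ build a finite simple rank-$3$ $\wedge$-matroid $N_n$ in the age containing test points $a_1,\dots,a_n$ and code points $(b_J)_{J\subseteq[n]}$, where $b_J$ is joined to $a_i$ for $i\in J$ through a private auxiliary configuration recognised by $\phi$, and to no $a_i$ with $i\notin J$. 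Such configurations are $C_4$-realisable once intermediate points are allowed, so they exist as finite linear spaces and embed into $M_*$. The hard part will be guaranteeing that the negative instances survive in $M_*$, i.e. that the embedding creates no spurious witnesses for $\phi(a_i;b_J)$ when $i\notin J$; I would handle this either by taking $\phi$ quantifier-free (so that the $\wedge$-embedding, being an isomorphism onto a $\wedge$-closed image, preserves both $\phi$ and $\neg\phi$) or by invoking the extension/genericity property of the Fra\"iss\'e limit to forbid such witnesses.

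For parts (3) and (4), the plan is to read off a \emph{free amalgamation} from the construction of $M_*$ and turn it into a stationary independence relation. I would declare $A \pureindep[C] B$ to hold when the $\wedge$-closure of $AC\cup BC$ is the canonical (free) amalgam of $\langle AC\rangle$ and $\langle BC\rangle$ over $\langle C\rangle$ --- concretely, when every line meeting both $A\setminus C$ and $B\setminus C$, and every new intersection point between the two sides, is already forced over $C$. Invariance, monotonicity, normality, transitivity and symmetry are then formal consequences of the symmetry and associativity of free amalgamation; existence follows from the amalgamation property underlying Theorems~\ref{th_construction} and~\ref{th_construction_omitting} (one can always place a free copy of $B$ over $C$ disjointly from $A$); and stationarity follows from homogeneity, since two free amalgams with isomorphic data over the same base are isomorphic and hence conjugate. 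For (4) I would exhibit an infinite set $X\subseteq M_*$ in general position whose $\wedge$-closure is the \emph{free} rank-$3$ $\wedge$-matroid on $X$ (its finite subconfigurations all lie in the age, so it embeds as a $\wedge$-closed subset); then every permutation of $X$ induces an automorphism of $\langle X\rangle$, and a back-and-forth argument using the homogeneity of $M_*$ (equivalently, the stationarity from (3)) extends each such automorphism to $M_*$, yielding an embedding $\mathrm{Sym}(\omega)\cong\mathrm{Sym}(X)\hookrightarrow\mathrm{Aut}(M_*)$. The main obstacle is making ``free amalgam'' precise for $\wedge$-matroids and verifying stationarity, i.e. that the free amalgam is unique up to isomorphism over its base.

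Finally, part (5) is an application of the now-standard theory of ample generic automorphisms. Assuming the age has the extension property for partial automorphisms, the plan is to combine EPPA with the canonical (free) amalgamation established in (3): by the criterion of Kechris--Rosendal, refined by Siniora--Solecki to the \emph{coherent} setting, a Fra\"iss\'e class with the amalgamation property and coherent EPPA has a limit whose automorphism group has ample generics. The free amalgamation of our class is meant to supply the coherence/canonicity that bare EPPA does not, so the hypotheses are met and $\mathrm{Aut}(M_*)$ has ample generics; the small index property (together with automatic continuity and uncountable cofinality) then follows from the Kechris--Rosendal implication ``ample generics $\Rightarrow$ small index property''. The only real work is to match our concrete class to the precise hypotheses of the cited theorem, in particular to check that the amalgamation witnessing EPPA can be taken coherent.
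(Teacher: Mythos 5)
Your items (3) and (5) follow essentially the paper's own route: the paper defines $A \pureindep[C] B$ by exactly your canonical-amalgam condition (Lemma \ref{lemma_for_stationary}, via Remark \ref{remark_canonical_amalgam}), and derives (5) from the EPPA hypothesis plus canonical amalgamation through Kechris--Rosendal \cite{kechris}. Your item (1) is correct but genuinely different: the paper (Lemma \ref{lemma_for_oligo}) builds, for each $n$, a $6$-generated $\wedge$-matroid of size $6+(n+1)$, whereas you use the planes $\mathrm{PG}(2,p)$; your subplane argument (a quadrangle generates the prime subplane, so $\mathrm{PG}(2,p)$ has no proper subplanes) is sound and also settles the omitting case of Theorem \ref{th_construction_omitting}, while the paper's configurations contain no projective plane at all, which is why they apply verbatim to $M_*(P)$. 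For item (2) the paper simply defers to \cite[Theorem 4.6]{paolini&hyttinen}; of your two branches only the quantifier-free one works, and note that it forces the shattered objects to be pairs rather than single points: a ``private'' connector point is invisible to a quantifier-free formula in singletons (the $\wedge$-closure of two points is just those two points), so the natural witness is something like $\wedge(x_1,x_2,y_1,y_2)\neq x_1$ (``the lines $x_1\vee x_2$ and $y_1\vee y_2$ meet in a new point''), whose negative instances survive in $M_*$ precisely because the embedded configuration is $\wedge$-closed. The alternative ``existential formula plus genericity'' branch should be discarded: genericity of the limit tends to create existential witnesses, not to forbid them.

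The genuine gap is in item (4). Extending each permutation $\sigma$ of your free set $X$ separately by back-and-forth produces some $\hat\sigma\in Aut(M_*)$, but the assignment $\sigma\mapsto\hat\sigma$ obtained this way is not a group homomorphism: nothing guarantees $\widehat{\sigma\tau}=\hat\sigma\hat\tau$, since each extension involves arbitrary choices depending on an enumeration. Equivalently, back-and-forth gives a surjection from the setwise stabilizer of $X$ onto $Sym(X)$, and a surjection onto $Sym(\omega)$ does not yield an embedding of $Sym(\omega)$ unless it splits. The missing ingredient is a coherent (functorial) extension, and this is exactly what the paper imports: item (3) feeds into the main theorem of M\"uller \cite{muller}, by which a Fra\"iss\'e structure admitting a stationary independence relation has an automorphism group embedding $Aut(N)$ for every countable $N$ whose age is contained in the class; applied to the trivial linear space (infinitely many points, all lines with exactly two points), whose automorphism group is $Sym(\omega)$ (Remark \ref{lemma_for_Sym}), this gives (4). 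Your relation from (3) is precisely the hypothesis M\"uller's theorem needs, so your outline is salvageable, but the one-permutation-at-a-time extension step as written does not prove the statement.
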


	Finally, we give an application to projective geometry proving:

	\begin{corollary}\label{proj_corollary} Let $M_*$ be as in Theorem \ref{th_construction}, and let $F(M_*)$ be the free projective extension of $M_*$ (cf. \cite{hall_proj}). Then:
	\begin{enumerate}[(1)]
	\item $F(M_*)$ embeds all the finite simple rank $3$ matroids as subgeometries;
	\item every $f \in Aut(M_*)$ extends to an $\hat{f} \in Aut(F(M_*))$;
	\item $f \mapsto \hat{f}$ is an isomorphism from $Aut(M_*)$ onto $Aut(F(M_*))$;
 	\item $Aut(F(M_*))$ embeds the symmetric group $Sym(\omega)$.
\end{enumerate}
\end{corollary}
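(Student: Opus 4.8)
The plan is to exploit the canonical, stagewise nature of Hall's free completion \cite{hall_proj}. Recall that $F(M_*)$ is built as a union $F(M_*) = \bigcup_{n < \omega} \Pi_n$, where $\Pi_0$ is $M_*$ regarded as a partial projective plane via its associated linear space (cf. Convention \ref{sloppy}), and $\Pi_{n+1}$ is obtained from $\Pi_n$ by adjoining a new line through every pair of points of $\Pi_n$ not already joined, and a new point at every pair of lines of $\Pi_n$ not already meeting. The freeness of the construction means that no incidences are created among elements already present at an earlier stage; in particular the inclusion $M_* \hookrightarrow F(M_*)$ is an embedding of linear spaces that neither adds nor destroys collinearities among points of $M_*$, so $M_*$ is a subgeometry of $F(M_*)$. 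For (1) I would then simply combine this with Theorem \ref{th_construction}: since every $\wedge$-embedding of a finite simple rank $3$ matroid into $M_*$ is a fortiori a subgeometry embedding, $M_*$ contains every finite simple rank $3$ matroid as a subgeometry, and hence so does $F(M_*)$.

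For (2), the key point is that the free completion is functorial in its base. Given $f \in Aut(M_*)$, it permutes the pairs of non-meeting lines and the pairs of unjoined points of $\Pi_0$, hence induces a bijection of the elements adjoined at stage $1$; proceeding by induction on $n$, $f$ extends uniquely to an incidence-preserving bijection $\hat{f}_n$ of $\Pi_n$ compatible with the inclusions $\Pi_n \subseteq \Pi_{n+1}$. The union $\hat{f} = \bigcup_n \hat{f}_n$ is then an automorphism of $F(M_*)$ restricting to $f$. Uniqueness of the stagewise extension immediately yields $\widehat{f \circ g} = \hat{f} \circ \hat{g}$ and $\widehat{\mathrm{id}} = \mathrm{id}$, so $f \mapsto \hat{f}$ is a group homomorphism $Aut(M_*) \to Aut(F(M_*))$; it is injective because $M_* \subseteq F(M_*)$ and $\hat{f} \restriction M_* = f$.

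The heart of (3), and the step I expect to be the main obstacle, is surjectivity: every $g \in Aut(F(M_*))$ must restrict to an automorphism of $M_*$, i.e. $g$ must stabilize $M_*$ setwise. Here I would invoke the structural theory of free planes, whose central feature is that the generating configuration is intrinsically recoverable. Concretely, Hall's analysis shows that every finite \emph{confined} configuration of a free plane (one in which each point lies on at least three of its lines and each line carries at least three of its points) is already contained in the base $\Pi_0 = M_*$, because the free process only ever attaches elements of low incidence degree that can be pruned away. Conversely, the universality and homogeneity of $M_*$ (Theorem \ref{th_construction}) guarantee that every point and every line of $M_*$ lies in some finite confined configuration inside $M_*$: given a point $p$ (resp. a line $\ell$), one uses universality to embed a fixed confined configuration, say a Fano subplane, and then homogeneity to move a designated vertex of it onto $p$ (resp. to arrange that two of its points lie on $\ell$, forcing $\ell$ to be a configuration line). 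Thus $M_*$ is exactly the union of the confined configurations of $F(M_*)$, which is an $Aut(F(M_*))$-invariant set, so $g(M_*) = M_*$ and $g \restriction M_* \in Aut(M_*)$. By uniqueness of the extension, $\widehat{g \restriction M_*} = g$ and $\widehat{f} \restriction M_* = f$, so $g \mapsto g \restriction M_*$ is a two-sided inverse to $f \mapsto \hat{f}$, establishing the isomorphism.

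Finally, (4) is immediate: by (3) and Theorem \ref{th_properties}(4) we have $Aut(F(M_*)) \cong Aut(M_*) \geq Sym(\omega)$. The only genuinely delicate ingredient is the confined-configuration characterization underpinning surjectivity in (3); once the intrinsic definability of $M_*$ inside $F(M_*)$ is in hand, the remaining verifications are the routine bookkeeping of the stagewise extension.
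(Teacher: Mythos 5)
Your proposal is correct and takes essentially the same route as the paper: the paper's entire proof consists of observing that every point and every line of $M_*$ lies in a copy of the Fano plane (a confined configuration) and then citing Hughes--Piper, Theorem 11.18, and Mendelsohn, Lemma 1, whose content --- unique stagewise extension of automorphisms to the free completion, and the fact that confined configurations of $F(M_*)$ already lie in $M_*$, making $M_*$ an $Aut(F(M_*))$-invariant set --- is exactly what you re-derive by hand. The only difference is that you prove these black-box facts about free extensions rather than citing them.
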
	

	We leave the following open questions:
	
	\begin{question}\label{question} Let $M_*$ be as in Theorem \ref{th_construction} or Theorem \ref{th_construction_omitting}.
	\begin{enumerate}[(1)]
	\item Does $Aut(M_*)$ have the small index property?
	\item Does $Aut(M_*)$ have ample generics?
\end{enumerate}
\end{question}

	\begin{question} \begin{enumerate}[(1)]
	\item Does the class of simple $\wedge$-matroids of rank $3$ have the extension property for partial automorphisms?
	\item Does the class of freely linearly ordered simple $\wedge$-matroids of rank $3$ have the Ramsey property?
\end{enumerate}	
\end{question}

	The only infinite homogeneous simple $\wedge$-matroids of rank $3$ known to the author are the ones from Theorems \ref{th_construction} and \ref{th_construction_omitting}, and the trivial one, i.e.  infinitely many points and infinitely many lines incident with exactly two points.
	
\begin{problem}\label{question_homogeneous} Classify the countable  homogeneous simple $\wedge$-matroids of rank $3$.
\end{problem}
		
	Concerning $F(M_*)$, in \cite{proj_embeddings} Kalhoff constructs a projective plane of Lenz-Barlotti class V embedding all the finite simple rank $3$ matroids. In \cite{baldwin} Baldwin constructs some almost strongly minimal projective planes of Lenz-Barlotti class I.1. We leave as an open problem the determination of the Lenz-Barlotti class of $F(M_*)$.  
	

\section{Preliminaries}

	For background on Fra\"iss\'e theory and homogeneous structures we refer to \cite[Chapter 6]{hodges}. In particular, given a homogeneous structure $M$ we refer to the closure up to isomorphisms of the collection of finitely generated substructures of $M$ as the {\em age} of $M$ and denote it by $\mathbf{K}(M)$. For background on the notions on automorphism groups occurring in Theorem \ref{th_properties} see e.g. \cite{kechris}. Concerning free projective extensions see \cite{hall_proj} and \cite[Chapter XI]{piper}. Concerning the notion of stationary independence:
	
		\begin{definition}[\cite{tent_ziegler} and \cite{muller}] Let $M$ be a homogeneous structure. We say that a ternary relation $A \pureindep[C] B$ between finitely generated substructures of $M$ is a {\em stationary independence relation} if the following axioms are satisfied:
\begin{enumerate}[$(A)$]
		\item (Invariance) if $A \pureindep[C] B$ and $f \in Aut(M)$, then $f(A) \pureindep[f(C)] f(B)$;
		\item (Symmetry) if $A \pureindep[C] B$, then $B \pureindep[C] A$;
		\item (Monotonicity) if $A \pureindep[C] \langle BD \rangle$ and $A \pureindep[C] B$, then $A \pureindep[\langle BC \rangle] D$;
		\item (Existence) there exists $A' \equiv_B A$ such that $A' \pureindep[B] C$;
		\item (Stationarity) if $A \equiv_C A'$, $A \pureindep[C] B$ and $A' \pureindep[C] B$, then $A \equiv_{\langle BC \rangle} A'$.
\end{enumerate}
\end{definition}

	\begin{definition} A {\em projective plane} is a linear space (cf. Definition \ref{def_plane}) such that:
	\begin{enumerate}[(A')]
	\item every pair of distinct lines intersects in a unique point;
	\item there exist at least four points no three of which are collinear.
\end{enumerate}
\end{definition}

	For a definition of the notion of independence property of a first-order theory see e.g. \cite[Exercise 8.2.2]{ziegler_book}.

\section{Proofs}

	We will prove a series of claims from which Theorems \ref{th_construction}, \ref{th_construction_omitting} and \ref{th_properties} follow.
	
	\begin{lemma}\label{theorem_fraisse} The class of simple $\wedge$-matroids of rank $ \leq 3$ is a Fra\"iss\'e class.
\end{lemma}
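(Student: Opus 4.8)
The plan is to verify the four Fra\"iss\'e conditions — essential countability, the hereditary property (HP), the joint embedding property (JEP) and the amalgamation property (AP) — for the class $\mathbf{K}$ of finite simple $\wedge$-matroids of rank $\leq 3$. Since the language consists of a ternary relation $R$ and the $4$-ary function $\wedge$, there are only countably many isomorphism types of finite structures, and closure under isomorphism is automatic. For HP one checks that if $N \in \mathbf{K}$ and $M \subseteq N$ is closed under $\wedge_N$, then $(M, R{\restriction}M, \wedge_N{\restriction}M)$ is again a simple $\wedge$-matroid: the exchange axiom is inherited, and a short case analysis on the two defining cases of Definition \ref{def_wedge_matroid} shows that $\wedge_N{\restriction}M$ computes intersection points inside the induced linear space on $M$, i.e. $\wedge_M = \wedge_N{\restriction}M$.

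Before amalgamating I would isolate the one structural feature that makes the enrichment by $\wedge$ work: \emph{a $\wedge$-embedding preserves parallelism}. Concretely, if $f\colon M \to N$ is a $\wedge$-embedding and the lines $p_1 \vee p_2$, $q_1 \vee q_2$ of $M$ are parallel (do not meet), then their images are parallel in $N$. Indeed, parallelism means $\wedge_M(p_1,p_2,q_1,q_2) = p_1$ (the default value), so preservation of $\wedge$ forces $\wedge_N(f p_1, f p_2, f q_1, f q_2) = f p_1$; by Definition \ref{def_wedge_matroid} a genuine intersection point of the image lines would be a fifth point distinct from $f p_1$, so no such point exists. Dually, lines meeting at a genuine point are sent to lines meeting at the image of that point. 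This is exactly the rigidity that ordinary matroid embeddings lack, and it is the only place the function $\wedge$ is used.

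For AP, given $\wedge$-embeddings $A \to B$ and $A \to C$ I would, after identifying $A = B \cap C$, build the \emph{free amalgam} $D$ on the point set $B \cup C$. Collinearity in $D$ is generated by the rule: the line through two points of $A$ is the union of its $B$-extension and its $C$-extension; every other line of $B$ (resp.\ of $C$) survives unchanged; and each pair $b \in B \setminus A$, $c \in C \setminus A$ not already collinear spans a fresh two-point line. Appealing to Convention \ref{sloppy}, it suffices to check that $D$ is a linear space, since the exchange axiom is then immediate from the uniqueness of the line through two points. The only delicate point is precisely that uniqueness: a failure would produce two distinct lines through a common pair of points, and chasing the definition this can only happen when some $b \in B \setminus A$ and $c \in C \setminus A$ lie on two distinct ``shared'' lines $\lambda_1, \lambda_2$ through pairs of $A$-points. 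Intersecting with $A$ shows that the corresponding $A$-lines $\mu_1, \mu_2$ are parallel in $A$ while their $B$-extensions meet (at $b$) and their $C$-extensions meet (at $c$) — which is forbidden by the parallelism-preservation of the $\wedge$-embeddings $A \to B$ and $A \to C$. Hence $D$ is a genuine linear space; the same parallelism argument shows that the meeting/parallel relations among $B$-lines (and among $C$-lines) are unchanged in $D$, so that $B, C \hookrightarrow D$ are $\wedge$-embeddings agreeing on $A$, giving AP. Taking $A = \varnothing$ in the same construction yields JEP.

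The main obstacle is the uniqueness-of-lines verification in the free amalgam: it is the exact configuration that destroys amalgamation for bare rank-$3$ matroids, and the entire argument hinges on showing that the extra data recorded by $\wedge$ rules it out. The remaining bookkeeping — that no two lines of $D$ share two points in any of the other cases, and that the induced $\wedge$-structure on $B$ and on $C$ is unchanged — is routine once parallelism-preservation is in hand.
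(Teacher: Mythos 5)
Your proposal is correct, but it takes a different route from the paper in an important sense: the paper's own proof of this lemma is essentially a citation --- hereditariness and JEP are dismissed as easy, and the amalgamation property is outsourced to \cite[Theorem 4.2]{paolini&hyttinen}, which works in the equivalent setting of geometric lattices in the language $\{0,1,\vee,\wedge\}$. You instead give a self-contained construction, and the amalgam you build (merge the two extensions of each $A$-line, keep all other lines, add fresh two-point lines across $B\setminus A$ and $C\setminus A$) is exactly the canonical amalgam $B \oplus_A C$ that the paper only describes after the fact in Remark \ref{remark_canonical_amalgam}. Your key lemma --- that $\wedge$-embeddings preserve parallelism, so the Oxley-type obstruction (two lines parallel in $A$ acquiring distinct intersection points in $B$ and in $C$) cannot occur --- is precisely the rigidity the enriched language is designed to provide, and your case analysis locating the only possible failure of the linear-space axiom in two merged lines meeting once in $B\setminus A$ and once in $C\setminus A$ is sound; in fact the configuration is already ruled out by parallelism-preservation of a \emph{single} one of the two embeddings, since $A$ being $\wedge$-closed in $B$ forbids the $B$-extensions of $A$-parallel lines from meeting at all. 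One small point worth making explicit: parallelism of $p_1\vee p_2$ and $q_1\vee q_2$ is not equivalent to the default value $\wedge(p_1,p_2,q_1,q_2)=p_1$ (the default also occurs when the lines meet at one of the four named points or coincide), so your argument tacitly also uses preservation of $R$ to exclude those degenerate meetings in the image; you gesture at this with the ``dually'' sentence, but it belongs in the same breath as the fifth-point argument. What your version buys is transparency: the paper's later results (Remark \ref{remark_canonical_amalgam}, Lemmas \ref{theorem_fraisse_omitting} and \ref{lemma_for_stationary}) all lean on the explicit shape of the canonical amalgam, which your proof exhibits directly rather than importing through the lattice-theoretic translation; what the paper's citation buys is brevity and compatibility with the canonical-amalgamation formalism of \cite{ssip_canonical_hom} already verified in the cited work.
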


	\begin{proof} The hereditary property is clear. The joint embedding property is easy and the amalgamation property is proved in \cite[Theorem 4.2]{paolini&hyttinen}. Notice that the context of \cite{paolini&hyttinen} is the study of geometric lattices in a language $L' = \{ 0, 1, \vee, \wedge \}$, but keeping in mind Definition \ref{def_geom_lat}, Convention \ref{sloppy}, and the fact that we are considering $\wedge$-matroids it is easy to see that the two context are indeed equivalent.
\end{proof}

\begin{definition}[{\cite[Definition 6]{ssip_canonical_hom}}]\label{can_amal} Let $M$ be a homogeneous structure and $\mathbf{K} = \mathbf{K}(M)$ its age. We say that $M$ has {\em canonical amalgamation} if there exists an operation $B_1 \oplus_A B_2$ on triples from $\mathbf{K}$ satisfying the following conditions:
	\begin{enumerate}[(a)]
	\item $B_1 \oplus_A B_2$ is defined when ${A} \subseteq {B}_i$ ($i = 1,2$) and $B_1 \cap B_2 = A$;
	\item $B_1 \oplus_A B_2$ is an amalgam of $B_1$ and $B_2$ over $A$;
	\item if $B_1 \oplus_A B_2$ and $B'_1 \oplus_{A'} B'_2$ are defined, and there exist $f_i: B_i \cong B'_i$ ($i = 1,2$) with $f_1 \restriction A = f_2 \restriction A$, then there is:
	$$f: B_1 \oplus_A B_2 \cong B'_1 \oplus_{A'} B'_2$$
such that $f \restriction B_1 = f_1$ and $f \restriction B_2 = f_2$.
\end{enumerate}
\end{definition}

	\begin{remark}\label{remark_canonical_amalgam} Notice that the amalgamation from \cite[Theorem 4.2]{paolini&hyttinen} used to prove Lemma \ref{theorem_fraisse} is canonical in the sense of Definition \ref{can_amal}. We will denote the canonical amalgam of $M_1$ and $M_2$ over $M_0$ from \cite[Theorem 4.2]{paolini&hyttinen} as $M_1 \oplus_{M_0} M_2$ (when we use this notation we tacitly assume that $M_0 \subseteq M_1$, $M_0 \subseteq M_2$ and $M_1 \cap M_2 = M_0$). Notice that the amalgam $M_3 := M_1 \oplus_{M_0} M_2$ can be characterized as the following $\wedge$-matroid:
	\begin{enumerate}[(1)]
	\item $M_3 = M_1 \cup M_2$ (i.e. $M_1 \cup M_2$ is the domain of $M_3$);
	\item $R^{M_3} = R^{M_1} \cup R^{M_2} \cup \{ \{ a, b, c \} : a \vee b = a \vee c = b \vee c = a' \vee b' \text{ and } \{ a', b' \} \subseteq M_0 \}$;
	\item\label{char_item} $\wedge_{M_3}(a, b, c, d) = a$, unless $a \vee b = a' \vee b'$, $c \vee d = c' \vee d'$ and $\wedge_{M_\ell}(a', b', c', d') \neq a'$, for some $\ell = 1, 2$ and $\{ a', b', c', d' \} \subseteq M_\ell$, in which case:
	$$\wedge_{M_3}(a, b, c, d) = \wedge_{M_\ell}(a', b', c', d').$$
\end{enumerate}
The intuition behind (\ref{char_item}) is that the value of the function symbol $\wedge_{M_3}(a, b, c, d)$ is trivial unless $a \vee b$ and $c \vee d$ are two intersecting lines from one of the $M_\ell$ ($\ell = 1, 2$).
\end{remark}

	\begin{lemma}\label{theorem_fraisse_omitting} Let $P$ be a finite projective plane, and $M_P$ the corresponding matroid. The class of simple $\wedge$-matroids $N$ of rank $\leq 3$ omitting\footnote{Recall that by this we mean that there is no injective map $f: M_P \rightarrow N$ such that $M_P \cong f(M_P)$.} $M_P$ is a Fra\"iss\'e class.
\end{lemma}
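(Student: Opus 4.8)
The plan is to verify the three Fra\"iss\'e conditions, reusing as much as possible the analysis of the unrestricted class from Lemma \ref{theorem_fraisse} and the explicit description of the canonical amalgam in Remark \ref{remark_canonical_amalgam}. The hereditary property is immediate: a substructure of a $\wedge$-matroid omitting $M_P$ again omits $M_P$, since an embedding of $M_P$ into the substructure would compose with the inclusion to give an embedding into the ambient matroid. The joint embedding property will follow from amalgamation over the empty (or a single-point) base, or directly by taking a free join in which no point of one factor is collinear with a point of the other except as forced; such a join creates no line with three points meeting both factors, hence no copy of the projective plane $M_P$. So the whole content is the amalgamation property.

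For amalgamation I would take exactly the canonical amalgam $M_3 := M_1 \oplus_{M_0} M_2$ of Remark \ref{remark_canonical_amalgam}, which already lies in the unrestricted Fra\"iss\'e class; the only thing to check is that $M_3$ still omits $M_P$ whenever $M_1$ and $M_2$ do. Suppose toward a contradiction that $S \subseteq M_3$ is a $\wedge$-subgeometry with $S \cong M_P$, say $P$ of order $n \geq 2$. Write $S_i = S \cap M_i$ and $S_0 = S \cap M_0$. Since $M_1$ and $M_2$ omit $M_P$, neither $S \subseteq M_1$ nor $S \subseteq M_2$ can hold, so both $A := S_1 \setminus S_0$ and $B := S_2 \setminus S_0$ are non-empty. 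The key structural input is item (2) of Remark \ref{remark_canonical_amalgam}: a line of $M_3$ carrying a point of $M_1 \setminus M_0$ together with a point of $M_2 \setminus M_0$ and at least a third point must be the extension of an $M_0$-line $a' \vee b'$ with $a', b' \in M_0$, and hence passes through at least two points of $M_0$. Since $M_P$ is a projective plane every line of $S$ has $n+1 \geq 3$ points, so each line of $S$ meeting both $A$ and $B$ is of this extended type.

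The main obstacle, and the place where the projective-plane hypothesis on $P$ is essential, is to convert this local description of the cross lines into the global conclusion that $S$ cannot straddle $M_0$. Here I would exploit that in a projective plane any two lines meet: for two distinct cross lines extending $M_0$-lines $\ell_0 \neq \ell_0'$, the unique intersection point $z \in S$ is forced either into $M_1 \cap M_2 = M_0$ (when the two lines reach $z$ from opposite sides) or onto a common point of two distinct $M_0$-lines inside a single $M_i$. Fixing a point of $A$, joining it to the points of $B$, and arguing symmetrically, one produces several $M_0$-lines through single points of $A$ and of $B$, which one then plays against the incidence counts of $P$ (each point on exactly $n+1$ lines, each line with $n+1$ points) to try to force all of $A \cup B$ into $M_0$, contradicting $A,B \neq \emptyset$. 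Carrying out this counting, and in particular ruling out the degenerate configurations in which $A$ or $B$ consists of a single point (where only one cross line is available and the above intersection argument has no purchase), is the delicate heart of the proof; once $S \subseteq M_1$ or $S \subseteq M_2$ is established the contradiction with $M_1,M_2$ omitting $M_P$ is immediate, and amalgamation, hence the Fra\"iss\'e property, follows.
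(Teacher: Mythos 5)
Your reduction is the same as the paper's: the hereditary property and JEP are routine, and for amalgamation you pass to the canonical amalgam $M_3 = M_1 \oplus_{M_0} M_2$ of Remark \ref{remark_canonical_amalgam} and try to show it still omits $M_P$, using the observation (the paper's items (i)--(ii)) that any line of a copy $S \cong M_P$ meeting both $A = S \cap (M_1 \setminus M_0)$ and $B = S \cap (M_2 \setminus M_0)$ must extend a line spanned by two points of $M_0$. But your write-up stops exactly where the content lies: you sketch a counting strategy and then state that carrying it out, together with the degenerate case $|A|=1$ or $|B|=1$, ``is the delicate heart of the proof''. This is a genuine gap, and moreover the strategy as described cannot be completed, because everything you use is incidence information about $M_3$, and at the incidence level the desired conclusion is \emph{false}. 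Indeed, let $M_P$ be the Fano plane on $\{1,\dots,7\}$, let $M_1$ have point set $\{1,\dots,6\} \cup \{p_i,q_i : i=1,2,3\}$ with nontrivial lines $\{1,2,3\},\{1,4,5\},\{2,4,6\},\{3,5,6\}$ and $\{1,6,p_1,q_1\},\{2,5,p_2,q_2\},\{3,4,p_3,q_3\}$, let $M_2$ have point set $\{7\}\cup\{p_i,q_i\}$ with lines $\{7,p_i,q_i\}$, and let $M_0 = \{p_i,q_i\}$ carry the trivial structure. Both $M_1$ and $M_2$ omit the Fano plane, every cross pair lies on an $M_0$-spanned line, and your dichotomy for intersections of cross lines holds (the three glued lines meet at $7$, a common point of distinct $M_0$-lines inside $M_2$); yet $\{1,\dots,7\}$ is a Fano plane in the glued incidence structure, with $B = \{7\}$ realizing precisely your degenerate case. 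What excludes this configuration is the hypothesis your argument never invokes: $M_0$ must be a substructure in the language containing $\wedge$, and here $\wedge_{M_2}(p_1,q_1,p_2,q_2) = 7 \notin M_0$, so this triple is not a legal amalgamation instance at all.

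The missing idea is thus the $\wedge$-closure of $M_0$ in $M_1$ and $M_2$. If $c_1 \vee d_1$ and $c_2 \vee d_2$ are distinct lines of $M_j$ with $c_1,d_1,c_2,d_2 \in M_0$ and they meet in a point $x$, then by Definition \ref{def_wedge_matroid} either $x$ is one of the four spanning points or $x = \wedge_{M_j}(c_1,d_1,c_2,d_2)$; in both cases $x \in M_0$. Hence every point of $M_j \setminus M_0$ lies on \emph{at most one} $M_0$-spanned line of $M_j$, and this single fact closes all of your gaps, degenerate case included. Given $a \in A$ and $b \in B$, the line $\ell_0 = a \vee_S b$ is a cross line, so its $M_1$-trace is an $M_0$-spanned line of $M_1$ through $a$; consequently no other line of $S$ through $a$ may meet $B$ (a second cross line would produce a second $M_0$-spanned $M_1$-line through $a$) nor contain two points of $S_0 := S \cap M_0$ (two such points would again span its $M_1$-trace), and symmetrically for $b$. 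Now fix any line $\ell_1 \neq \ell_0$ of $S$ through $a$: the $n \geq 2$ lines of $S$ through $b$ other than $\ell_0$ meet $\ell_1$ in $n$ pairwise distinct points, none equal to $a$ or $b$, each lying in $(A \cup S_0) \cap (B \cup S_0) = S_0$; this contradicts the fact that $\ell_1$ carries at most one point of $S_0$. So $A = \emptyset$ or $B = \emptyset$, i.e.\ $S \subseteq M_1$ or $S \subseteq M_2$, contradicting that $M_1, M_2$ omit $M_P$. In short: your skeleton matches the paper's proof, but the step you defer is the whole theorem, and no incidence count alone can supply it --- the lattice operation $\wedge$, which is the raison d'\^etre of the whole paper, must enter the argument.
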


	\begin{proof} Also in this case, the only non-trivial part of the proof is amalgamation. Let $M_0, M_1, M_2$ be $\wedge$-matroids omitting $M_P$ and such that $M_0 \subseteq M_1$, $M_0 \subseteq M_2$ and $M_1 \cap M_2 = M_0$. Let $M_3 := M_1 \oplus_{M_0} M_2$ be as in Remark \ref{remark_canonical_amalgam}. We want to show that $M_3$ does not embed $M_P$, but this is clear noticing that by Remark \ref{remark_canonical_amalgam} we have: 
	\begin{enumerate}[(i)]
	\item if $j \in \{ 1, 2 \}$ and $\ell$ is a line from $M_j$ such that there are no $a_0, a_1$ in $M_0$ with $\ell = a_0 \vee a_1$, then the number of points incident with $\ell$ in $M_j$ is equal to the number of points incident with $\ell$ in $M_3$;
	\item if $\ell$ is a line of $M_3$ which is incident with at most one point of $M_1$ and  at most one point of $M_2$, respectively, then $\ell$ is incident with exactly two points.
\end{enumerate}	 
\end{proof}

	We can now prove Theorems \ref{th_construction} and \ref{th_construction_omitting}.

	\begin{proof}[Proof of Theorems \ref{th_construction} and \ref{th_construction_omitting}] This follows from Lemmas \ref{theorem_fraisse} and \ref{theorem_fraisse_omitting} using Fra\"iss\'e theory (see e.g. \cite[Chapter 6]{hodges}).
\end{proof}

	The following lemma establishes the non $\aleph_0$-categoricity of the homogeneous structures of Theorems \ref{th_construction} and \ref{th_construction_omitting}.

	\begin{lemma}\label{lemma_for_oligo} For every $n < \omega$ there exists a finite simple rank $3$ $\wedge$-matroid $M(n)$ of size $6 + (n+1)$, and $6$ distinct points $p_1, .., p_6 \in M(n)$ such that $\langle p_1, ..., p_6 \rangle_{M(n)} = M(n)$, where $\langle A \rangle_B$ denotes the substructure generated by $A$ in $B$. Furthermore, $M(n)$ can be taken such that it does not contain any projective plane.
\end{lemma}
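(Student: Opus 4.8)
The plan is to build $M(n)$ explicitly as a ``double perspectivity'' (zig-zag) configuration on two rails and two centres, arranged so that the six generators are forced to spin out a chain of exactly $n+1$ new intersection points. I take generators $p_1,p_2,p_3,p_4,p_5,p_6$, set the two rails $\ell := p_1\vee p_2$ and $\ell' := p_3\vee p_4$, and use $p_5,p_6$ as centres of perspectivity lying off $\ell\cup\ell'$. I then define $q_1,\dots,q_{n+1}$ alternately on $\ell'$ and $\ell$ by projecting back and forth through the two centres: $q_1:=\wedge(p_5,p_1,p_3,p_4)=(p_5\vee p_1)\wedge\ell'\in\ell'$, then $q_2:=(p_6\vee q_1)\wedge\ell\in\ell$, then $q_3:=(p_5\vee q_2)\wedge\ell'$, and so on, switching the centre at each step. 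The lines of $M(n)$ carrying at least three points are exactly $\ell$ (holding $p_1,p_2$ and the even $q_i$), $\ell'$ (holding $p_3,p_4$ and the odd $q_i$), the $p_5$-lines $\{p_5,p_1,q_1\},\{p_5,q_2,q_3\},\dots$, and the $p_6$-lines $\{p_6,q_1,q_2\},\{p_6,q_3,q_4\},\dots$; every other pair of points spans a $2$-point line, and I \emph{declare the projection line leaving the final point $q_{n+1}$ to be disjoint from its target rail}, so the chain stops there.

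I would then verify the construction directly. For the linear space axioms, a finite check shows that any two of the listed long lines meet in at most one point: the two rails are disjoint, two $p_5$-lines (resp.\ two $p_6$-lines) meet only in their common centre, and a $p_5$-line meets a $p_6$-line in at most one shared $q_i$; filling in $2$-point lines for all remaining pairs therefore yields a genuine linear space, hence a simple rank-$3$ matroid with $\wedge$ the induced meet, which is plainly closed under $\wedge$. For generation, each $q_{i+1}$ is produced as $\wedge$ of a centre, the point $q_i$, and two anchors of the target rail, and the returned value is the genuine intersection point (it is distinct from all four arguments and from $0$); hence $\langle p_1,\dots,p_6\rangle$ contains every $q_i$ and equals $M(n)$. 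Nothing more is generated, since $2$-point lines yield nothing, every junction of two long lines is already a $q_i$ or a centre, and the terminal projection line misses its rail. This gives $|M(n)|=6+(n+1)$ and rank $3$ (e.g.\ $p_1,p_3,p_5$ are not collinear).

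The main obstacle is the final clause, that $M(n)$ contains no projective plane, and I would handle it with a degree count. Call a line of $M(n)$ \emph{long} if it carries at least three points; the long lines are the two rails $\ell,\ell'$ together with the projection lines, the latter having exactly three points each. Suppose $T\subseteq M(n)$ induces a projective plane $\pi$ of order $m$. Every line of $\pi$ has $m+1\geq 3$ points, so it is the trace on $T$ of a long line, and distinct lines of $\pi$ through a common point arise from distinct long lines; hence every point of $T$ lies on at least $m+1$ long lines. But in $M(n)$ the only points meeting three or more long lines are the centres $p_5,p_6$ and the interior junctions $q_1,\dots,q_n$, and each such $q_i$ lies on exactly three; therefore $m+1\leq 3$ and $\pi$ must be the Fano plane.

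To exclude Fano, I argue as follows. The two centres cannot both lie in $T$: otherwise the line of $\pi$ joining them would be the trace of a long line containing both $p_5$ and $p_6$, and no long line of $M(n)$ does (the rails contain neither centre, and each projection line contains exactly one). So at most one centre lies in $T$, say $p_6\notin T$. A projection line can be a line of $\pi$ only if its unique centre lies in $T$; hence the seven lines of $\pi$ can only be traces of the rails and the $p_5$-lines. Since the two rails are disjoint, at most one is a line of $\pi$, and at most three lines of $\pi$ pass through $p_5$, giving at most four lines of $\pi$ in all, a contradiction. This rules out Fano and hence any projective plane. (Alternatively, one may realise the whole configuration in $PG(2,\mathbb{R})$ in sufficiently general position and invoke the fact that a Pappian plane of characteristic $0$ has no finite projective subplane.)
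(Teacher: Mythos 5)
Your construction is essentially the paper's own: the paper likewise builds $M(n)$ as a zig-zag chain of perspectivities with two centres and two fixed rails, adding each new point $q_i$ as the meet of the line joining a centre to the previous point with the opposite rail (alternating by parity), so that the six initial points generate the whole chain under $\wedge$. The only substantive difference is that you explicitly verify the final clause --- via the degree count forcing any embedded plane to be Fano, and then the Fano exclusion --- whereas the paper's proof leaves the ``no projective plane'' claim implicit in the construction; your argument for it is correct.
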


	\begin{proof} By induction on $n < \omega$, we construct a finite simple rank $3$ $\wedge$-matroid $M(n)$ such that:
	\begin{enumerate}[(a)]
	\item the domain of $M(n)$ is $\{ p_1^{-}, p_2^{-}, p_1^{+}, p_2^{+}, p_1^{*}, p_2^{*}, q_0, ..., q_n \}$;
	\item $|\{ p_1^{-}, p_2^{-}, p_1^{+}, p_2^{+}, p_1^{*}, p_2^{*}, q_0, ..., q_n \}| = 6 + (n+1)$;
	\item if $n$ is even, then $p_1^{-} \vee q_n$ and $p_2^{-} \vee p^*_1$ are parallel in $M(n)$;
	\item if $n$ is odd, then $p_1^{+} \vee q_n$ and $p_2^{+} \vee p^*_2$ are parallel in $M(n)$;
	\item $\langle p_1^{-}, p_2^{-}, p_1^{+}, p_2^{+}, p^*_1, p^*_{2} \rangle_{M(n)} = M(n)$.
\end{enumerate}
\underline{$n = 0$}. Let $M$ be the simple rank $3$ $\wedge$-matroid with domain $\{ p_1^{-}, p_2^{-}, p_1^{+} p_2^{+}, p_1^{*}, p_2^{*} \}$ such that are no hyper-edges (i.e. every line is incident with exactly two points). Add to $M$ the point $q_0$ which is incident only with the lines $p_1^{+} \vee p_2^{+}$ and $p_1^{*} \vee p_2^{*}$ (which are parallel in $M$), and let $M(0)$ be the resulting $\wedge$-matroid.
\newline \underline{$n = 2k+1$}. Let $M(n-1)$ be constructed, then $M(n-1)$ contains the lines $p_1^{-} \vee q_{2k}$ and $p_2^{-} \vee p^*_1$, and, by induction hypothesis, they are parallel in $M(n-1)$. Add to $M(n-1)$ the point $q_n$ which is incident only with the lines $p_1^{-} \vee q_{2k}$ and $p_2^{-} \vee p^*_1$ which are parallel in $M(n-1)$), and let $M(n)$ be the resulting $\wedge$-matroid.
\newline \underline{$n = 2k > 0$}. Let $M(n-1)$ be constructed, then $M(n-1)$ contains the lines $p_1^{+} \vee q_{2k-1}$ and $p_2^{+} \vee p^*_{2}$, and, by induction hypothesis, they are parallel in $M(n-1)$. Add to $M(n-1)$ the point $q_n$ which is incident only with the lines $p_1^{+} \vee q_{2k-1}$ and $p_2^{+} \vee p^*_{2}$, and let $M(n)$ be the resulting $\wedge$-matroid.
\end{proof}

	\begin{lemma}\label{lemma_for_indep_prop} Let $M_*$ be as in Theorem \ref{th_construction} or Theorem \ref{th_construction_omitting}. Then $M_*$ has the independence property.
\end{lemma}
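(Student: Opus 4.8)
To prove that $M_*$ has the independence property (IP), I would exhibit a definable formula $\varphi(\bar{x}; \bar{y})$ and, for every $n < \omega$, parameters $b_0, \dots, b_{n-1}$ together with tuples $(a_S)_{S \subseteq n}$ such that $M_* \models \varphi(a_S; b_i)$ if and only if $i \in S$. The natural source of such a formula is the \emph{parallelism relation} on lines, which is eminently definable in the $\wedge$-language: two lines $\ell_1 = a \vee b$ and $\ell_2 = c \vee d$ are parallel precisely when $\wedge(a,b,c,d) = a$ while the lines are genuinely distinct. Parallelism (more precisely, whether a point lies on a line parallel to a fixed line) is exactly the kind of combinatorial data that the construction in Lemma \ref{lemma_for_oligo} was engineered to control freely, so I expect the witnessing configurations to be built by the same amalgamation technique.

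\textbf{Key steps.} First I would fix a formula $\varphi(x; y_1, y_2, y_3, y_4)$ asserting, roughly, that the point $x$ is incident with a line parallel to the line $y_1 \vee y_2$ inside the configuration spanned by the $y_i$'s — concretely, a Boolean combination of the atomic relations $R$ and equations involving $\wedge$ that pins down "$x \vee p$ is parallel to $y_1 \vee y_2$" for a suitable auxiliary point $p$. Second, for a fixed $n$, I would use the Fra\"iss\'e limit property: since $M_*$ is universal and homogeneous for the class of finite simple $\wedge$-matroids of rank $\leq 3$ (Theorem \ref{th_construction}), it suffices to construct, for each subset $S \subseteq n$, a finite $\wedge$-matroid $N_S$ containing fixed parameters $\bar{b} = (b_0, \dots, b_{n-1})$ and a point $a_S$ realizing the parallelism pattern dictated by $S$. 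Third, I would amalgamate all these $N_S$ over the common parameter set $\bar{b}$ using the canonical amalgamation $\oplus$ from Remark \ref{remark_canonical_amalgam}; because the amalgam only adds the forced incidences and leaves all other lines with exactly two points (item (ii) in the proof of Lemma \ref{theorem_fraisse_omitting}), the parallelism pattern of each $a_S$ is preserved. The resulting finite structure embeds into $M_*$ by universality, witnessing that $\varphi$ has IP.

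\textbf{Main obstacle.} The delicate point is ensuring that the $2^n$ parallelism conditions can be realized \emph{simultaneously and independently} over the single parameter tuple $\bar{b}$, without the forced collinearities of the amalgam (clause (2) of Remark \ref{remark_canonical_amalgam}) creating unintended incidences that collapse distinct patterns or, worse, generate a projective plane in the omitting case of Theorem \ref{th_construction_omitting}. The construction of Lemma \ref{lemma_for_oligo} is the right guide here: it shows how to attach a new point $q$ lying on precisely two prescribed parallel lines and nothing else, and it does so while keeping the structure free of projective planes. I would therefore set up the parameters $\bar{b}$ as $n$ independent "parallel-class markers" (mirroring the paired points $p_i^{\pm}, p_i^{*}$ of that lemma) so that each $a_S$ can independently choose, for each coordinate $i$, whether or not to lie on the line parallel to the $i$-th marker. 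The verification that these choices remain independent after amalgamation reduces to checking that the only new edges are the forced ones, which is exactly the content of the characterization in Remark \ref{remark_canonical_amalgam}.
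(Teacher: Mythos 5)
The paper's own proof is a one-line deferral to \cite[Theorem 4.6]{paolini&hyttinen}, and the argument there is essentially what you reconstruct: IP is witnessed by the quantifier-free definable parallelism/intersection relation between lines, with the $2^n$ intersection patterns realized freely via the canonical amalgamation and transferred into $M_*$ by universality, parallelism being preserved precisely because $\wedge$-substructures are closed under the meet function. So your proposal is correct and matches the (cited) proof in approach; the one point to be careful about is that your auxiliary point $p$ must remain a free variable of a quantifier-free formula (as in your Boolean-combination formulation) rather than being existentially quantified, since in $M_*$ every point trivially lies on \emph{some} line parallel to any given line.
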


	\begin{proof} As in \cite[Theorem 4.6]{paolini&hyttinen}.
\end{proof}

	\begin{lemma}\label{lemma_for_stationary} Let $M_*$ be as in Theorem \ref{th_construction} or Theorem \ref{th_construction_omitting}. For every finite substructures $A, B, C$ of $M_*$, define $A \pureindep[C] B$ if and only if $\langle A, B, C \rangle_{M_*} \cong \langle A, C \rangle_{M_*} \oplus_C \langle B, C \rangle_{M_*}$. Then $A \pureindep[C] B$ is a stationary independence relation.
\end{lemma}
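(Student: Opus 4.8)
The plan is to verify the five axioms $(A)$--$(E)$ one at a time, leaning throughout on the explicit description of the canonical amalgam $\oplus$ in Remark \ref{remark_canonical_amalgam} and on its canonicity in the sense of Definition \ref{can_amal}. I read the defining condition of $A \pureindep[C] B$ as asserting both that $\langle A,C\rangle_{M_*} \cap \langle B,C\rangle_{M_*} = C$ (so that $\langle A,C\rangle \oplus_C \langle B,C\rangle$ is defined) and that the inclusion-induced map $\langle A,C\rangle \oplus_C \langle B,C\rangle \to \langle A,B,C\rangle_{M_*}$ is an isomorphism; below I abbreviate $\langle X\rangle_{M_*}$ to $\langle X\rangle$. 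Invariance $(A)$ is then immediate, since any $f \in Aut(M_*)$ commutes with the generation operator and hence carries the witnessing isomorphism for $A \pureindep[C] B$ to one for $f(A) \pureindep[f(C)] f(B)$ (one may also invoke Definition \ref{can_amal}(c) with the $f_i$ the restrictions of $f$). Symmetry $(B)$ follows because clauses (1)--(3) describing $M_1 \oplus_{M_0} M_2$ in Remark \ref{remark_canonical_amalgam} are manifestly symmetric in $M_1$ and $M_2$, so $\langle A,C\rangle \oplus_C \langle B,C\rangle \cong \langle B,C\rangle \oplus_C \langle A,C\rangle$ over $A \cup B \cup C$.

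For Existence $(D)$ I would realize the free amalgam inside $M_*$. Given finite substructures $A,B,C$, form $N := \langle A,B\rangle \oplus_B \langle B,C\rangle$ (after renaming the points of $A$ so that the two factors meet exactly in $B$); by Lemma \ref{theorem_fraisse} (resp.\ Lemma \ref{theorem_fraisse_omitting}) it lies in the age of $M_*$. Since $\langle B,C\rangle$ is a finite substructure of both $N$ and $M_*$, the extension property of the Fra\"iss\'e limit $M_*$ (Theorem \ref{th_construction}, resp.\ Theorem \ref{th_construction_omitting}) furnishes an embedding $g : N \to M_*$ with $g \restriction \langle B,C\rangle = \mathrm{id}$. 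Setting $A' := g(A)$, the restriction $g \restriction \langle A,B\rangle$ is an isomorphism onto $\langle A',B\rangle$ fixing $B$, so $A' \equiv_B A$, while $g(N) = \langle A',B,C\rangle$ is by construction the amalgam $\langle A',B\rangle \oplus_B \langle B,C\rangle$, that is, $A' \pureindep[B] C$.

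Stationarity $(E)$ is where canonicity does the real work. Suppose $A \equiv_C A'$, $A \pureindep[C] B$ and $A' \pureindep[C] B$. The first hypothesis gives an isomorphism $\phi : \langle A,C\rangle \to \langle A',C\rangle$ fixing $C$ and sending $A$ to $A'$; together with the identity on $\langle B,C\rangle$ it agrees with the latter on the common part $C$, so Definition \ref{can_amal}(c) yields an isomorphism
$$\Phi : \langle A,C\rangle \oplus_C \langle B,C\rangle \;\longrightarrow\; \langle A',C\rangle \oplus_C \langle B,C\rangle$$
extending $\phi$ and fixing $\langle B,C\rangle$ pointwise. Composing $\Phi$ with the witnessing isomorphisms of the two independence statements turns it into an isomorphism $\langle A,B,C\rangle \to \langle A',B,C\rangle$ fixing $\langle B,C\rangle$ and sending $A$ to $A'$; homogeneity of $M_*$ extends it to an automorphism, giving $A \equiv_{\langle BC\rangle} A'$.

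The remaining axiom, Monotonicity $(C)$, is the step I expect to be the main obstacle, as it amounts to an associativity law for $\oplus$. Unpacking the hypotheses $A \pureindep[C] \langle BD\rangle$ and $A \pureindep[C] B$ and the desired conclusion $A \pureindep[\langle BC\rangle] D$, one must produce an isomorphism, fixing $A \cup B \cup C \cup D$, witnessing
$$\langle A,C\rangle \oplus_C \langle B,C,D\rangle \;\cong\; \big(\langle A,C\rangle \oplus_C \langle B,C\rangle\big) \oplus_{\langle B,C\rangle} \langle B,C,D\rangle .$$
Both sides have underlying set $A \cup B \cup C \cup D$, so the content is that the relation $R$ and the function $\wedge$ produced by the two nested applications of rules (1)--(3) of Remark \ref{remark_canonical_amalgam} coincide. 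I would verify this by a direct case analysis driven by the geometric meaning of those rules: a triple becomes collinear, or a $\wedge$-value becomes nontrivial, only when the relevant lines already meet inside one of the factors, and in the free amalgam over $C$ the $A$-side and the $(B,C,D)$-side interact only through lines of the base (equivalently, every cross line carries exactly two points, as in item (ii) of the proof of Lemma \ref{theorem_fraisse_omitting}). The key point to check is that enlarging the base from $C$ to $\langle B,C\rangle$ creates no new $A$--$D$ incidence or nontrivial $\wedge$-value: since $A \pureindep[C] B$ already fixes the interaction of $A$ with $\langle B,C\rangle$ to be free, amalgamating $D$ over $\langle B,C\rangle$ can only reproduce cross-structure already present on the left-hand side. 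Carrying out this comparison carefully, clause by clause, is the technical heart of the lemma; once it is done, all five axioms hold and $A \pureindep[C] B$ is a stationary independence relation.
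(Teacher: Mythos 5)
Your proposal is correct and follows essentially the same route as the paper, whose entire proof is the one-liner ``Easy to see using Remark \ref{remark_canonical_amalgam}'': each axiom is verified directly from the explicit description of the canonical amalgam $\oplus$, together with canonicity (Definition \ref{can_amal}(c)) for Stationarity and the extension property of the Fra\"iss\'e limit for Existence. Your reduction of Monotonicity to the associativity isomorphism $\langle A,C\rangle \oplus_C \langle B,C,D\rangle \cong \bigl(\langle A,C\rangle \oplus_C \langle B,C\rangle\bigr) \oplus_{\langle B,C\rangle} \langle B,C,D\rangle$ is exactly the right content, and the case analysis you outline does close it (cross-factor collinearities and nontrivial $\wedge$-values arise only on lines spanned by two points of the base, and a line of $\langle B,C\rangle$ not spanned by two points of $C$ picks up no points from the $A$-side), so your write-up is in fact more detailed than the paper's own.
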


	\begin{proof} Easy to see using Remark \ref{remark_canonical_amalgam}.
\end{proof}

	\begin{lemma}\label{lemma_for_completeness} Let $M_*$ be as in Theorem \ref{th_construction} or Theorem \ref{th_construction_omitting}. If $f  \in Sym(M_*)$ induces an automorphism of $Aut(M_*)$ (i.e. $g \mapsto fgf^{-1} \in Aut(Aut(M_*))$), then $f \in Aut(M_*)$.
\end{lemma}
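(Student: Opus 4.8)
The plan is to deduce that $f$ is an automorphism of the $\wedge$-matroid structure $(V,R,\wedge)$, i.e.\ that $f$ preserves $R$ and $\wedge$; this is exactly the conclusion $f\in Aut(M_*)$. Write $G:=Aut(M_*)$ and $\Omega:=V$. The hypothesis that $c_f\colon g\mapsto fgf^{-1}$ is an automorphism of $G$ gives in particular $fGf^{-1}=c_f(G)=G$, so $f$ normalizes $G$ inside $Sym(\Omega)$. Consequently $f$, acting diagonally on $\Omega^n$, carries $G$-orbits to $G$-orbits: if $O$ is a $G$-orbit then for every $g\in G$ one has $g(f(O))=f\big((f^{-1}gf)(O)\big)=f(O)$, so $f(O)$ is $G$-invariant, and being the bijective image of a single orbit it is itself a single orbit.

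Next I would record the easy reduction that preserving $R$ already forces preserving $\wedge$. Since a line of $M_*$ is recovered from $R$ as $\ell_{ab}=\{a,b\}\cup\{x:R(a,b,x)\}$, any bijection of $\Omega$ preserving $R$ sends $\ell_{ab}$ onto $\ell_{f(a)f(b)}$, hence sends the intersection point of two lines to the intersection point of their images; by Definition \ref{def_wedge_matroid} this says precisely that such a bijection preserves $\wedge$. Thus the whole lemma reduces to showing that $f$ preserves collinearity.

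The heart of the argument is to pin down the action of $f$ on triples. By homogeneity (Theorem \ref{th_construction}) two tuples lie in the same $G$-orbit iff they have the same quantifier-free type; on ordered triples of distinct points the only invariant is collinearity, so there are exactly two such orbits, the collinear orbit $C$ and the non-collinear orbit $N$. By the first paragraph $f$ permutes $\{C,N\}$, and I must rule out the transposition. For this I would use the $f$-equivariance of the coordinate projections $\Omega^4\to\Omega^3$ together with a combinatorial asymmetry between triangles and collinear triples. Assign to each $G$-orbit $S$ of distinct $4$-tuples its \emph{profile}, the quadruple of colours in $\{C,N\}$ of the four sub-triples $(123),(124),(134),(234)$; since $f$ commutes with these projections and acts on triple-orbits by the transposition, the profile of $f(S)$ is obtained from that of $S$ by swapping $C\leftrightarrow N$ in every coordinate. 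Hence, if $f$ swapped the colours, the set of realized profiles would have to be closed under this coordinatewise swap. But the off-line extension of a collinear triple realizes the profile $(C,N,N,N)$, whereas its swap $(N,C,C,C)$ is \emph{not} realized: a $4$-tuple $(a,b,c,d)$ with $(a,b,c)$ non-collinear and $(a,b,d),(a,c,d),(b,c,d)$ all collinear would force $d$ onto each of the three distinct lines $a\vee b$, $a\vee c$, $b\vee c$ of a triangle, which pairwise meet only in $a,b,c$. Therefore $f$ cannot swap the colours, it fixes $C$ and $N$, and so preserves $R$; combined with the second step this gives $f\in Aut(M_*)$.

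The main obstacle is exactly this last step, ruling out the swap $C\leftrightarrow N$: a priori a normalizing permutation need only permute the two triple-orbits, and one must exhibit a genuinely orbit-internal invariant distinguishing them. The triangle asymmetry above does this, and since it concerns a four-point configuration containing no projective plane it is available in the age of $M_*$ in both cases of Theorems \ref{th_construction} and \ref{th_construction_omitting}; one should also verify that the handful of $4$-point configurations needed to list the realized profiles indeed belong to these ages, which is immediate from universality.
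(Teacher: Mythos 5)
Your proposal is correct and follows essentially the same route as the paper's proof: reduce to showing $f$ preserves $R$ (since $R$-preservation recovers lines and hence $\wedge$), use the normalization of $Aut(M_*)$ to conclude $f$ permutes the two triple-orbits, and rule out the swap via the configuration of a $3$-point line plus an external point, whose swapped "profile" is forbidden by the exchange axiom. Your profile formalism is just a more explicit packaging of the paper's direct application of the exchange axiom to the image of that same configuration.
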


	\begin{proof} First of all, notice that if $M$ is a simple rank $3$ $\wedge$-matroid and $M^-$ is the reduct of $M$ to the language $L = \{ R \}$ then we have that $f \in Aut(M)$ if and only if $f \in Aut(M^-)$. Thus if $f \notin Aut(M_*)$, then $f \notin Aut(M^-_*)$, i.e. there exists a set $\{ a, b, c \} \subseteq M_*$ such that either $\{ a, b, c \}$ is dependent in $M_*$ and $\{ f(a), f(b), f(c) \}$ is independent in $M_*$, or $\{ a, b, c \}$ is independent in $M_*$ and $\{ f(a), f(b), f(c) \}$ is dependent in $M_*$. 
Modulo replacing $f$ with $f^{-1}$, we can assume, that $\{ a, b, c \}$ is independent in $M_*$ and $\{ f(a), f(b), f(c) \}$ is dependent in $M_*$. 
Suppose now that in addition $f$ induces an automorphism of $Aut(M_*)$. Since $M_*$ is homogeneous, it is easy to see that $f$ has to send dependent sets of size $3$ to independent sets of size $3$. Now, by Definition \ref{def_matroid}, if $R(a, b, c)$ and $R(a, b, d)$, then $\{a, b, c, d \}$ is an $R$-clique. On the other hand, trivially in $M_*$ we can find distinct points $\{ a, b, c, d \}$ such that $\{ a, b, c \}$ is an independent set, $\{ a, b, d \}$ is an independent set, and $\{b, c, d \}$ is {\em not} an independent set. Hence, we easily reach a contradiction.
\end{proof}

	\begin{remark}\label{lemma_for_Sym} Notice that the linear space $P$ consisting of infinitely many points and infinitely many lines incident with exactly two points satisfies $Aut(P) \cong Sym(\omega)$.
\end{remark}

	\begin{proof}[Proof of Theorem \ref{th_properties}] Item (1) follows from Lemma \ref{lemma_for_oligo}. Item (2) is Lemma \ref{lemma_for_indep_prop}. Item (3) follows from Lemma \ref{lemma_for_stationary}. Item (4) follows from item (3), the main result of \cite{muller} and Remark \ref{lemma_for_Sym}. Item (5) follows from Remark \ref{remark_canonical_amalgam} (JEP for partial automorphisms is easy to see), \cite[Theorem 1.6]{kechris}, and \cite[Theorem 6.2]{kechris}. 
\end{proof}

	\begin{proof}[Proof of Corollary \ref{proj_corollary}] Notice that every point and every line of $M_*$ is contained in a copy of the Fano plane (which is a confined configuration, in the terminology of \cite[pg. 220]{piper}). Thus, the result follows from \cite[Theorem 11.18]{piper} \mbox{or \cite[Lemma 1]{projective}.}
\end{proof}


\begin{thebibliography}{10}

\bibitem{baldwin}
John T. Baldwin.
\newblock {\em Some Projective Planes of Lenz-Barlotti Class I}.
\newblock Proc. Amer. Math. Soc. {\bf 123} (1995), no. 1, 251-256.

\bibitem{cherlin1}
Gregory Cherlin.
\newblock {\em Homogeneous Tournaments Revisited}.
\newblock Geom. Dedicata {\bf 26} (1988), no. 2, 231-239.

\bibitem{cherlin2}
Gregory Cherlin and Alistair H. Lachlan.
\newblock {\em Stable Finitely Homogeneous Structures}.
\newblock Trans. Amer. Math. Soc. {\bf 296} (1986), no. 2, 815-850.

\bibitem{rota}
Henry~H. Crapo and Giancarlo Rota.
\newblock {\em On the Foundations of Combinatorial Theory: Combinatorial Geometries}.
\newblock M.I.T. Press, Cambridge, Mass, 1970.

\bibitem{devillers}
Alice Devillers and Jean Doyen
\newblock {\em Homogeneous and Ultrahomogeneous Linear Spaces}.
\newblock J. Combin. Theory Ser. A {\bf 84} (1998), no. 2, 236-241.

\bibitem{jb1}
Manfred Droste, Mich\`ele Giraudet, Dugald Macpherson, and Norbert Sauer.
\newblock {\em Set-Homogeneous Graphs}.
\newblock J. Combin. Theory Ser. B {\bf 62} (1994), no. 1, 63-95.

\bibitem{jb2}
Robert Gray, Dugald Macpherson, Cheryl E. Praeger, and Gordon F. Royle.
\newblock {\em Set-Homogeneous Directed Graphs}.
\newblock J. Combin. Theory Ser. B {\bf 102} (2012), no. 2, 474-520.

\bibitem{hall_proj}
Marshall Hall.
\newblock {\em Projective Planes}.
\newblock Trans. Amer. Math. Soc. {\bf 54} (1943), 229-277.

\bibitem{henson_graphs}
C. Ward Henson.
\newblock {\em A Family of Countable Homogeneous Graphs}.
\newblock Pacific J. Math. {\bf 38} (1971), no. 1, 69-83.

\bibitem{hodges}
Wilfrid Hodges.
\newblock {\em Model Theory}.
\newblock Cambridge University Press, 1993.

\bibitem{piper}
Daniel R. Hughes and Fred C. Piper.
\newblock {\em Projective Planes}.
\newblock Graduate Texts in Mathematics, Vol. 6. Springer-Verlag, New York-Berlin, 1973.

\bibitem{paolini&hyttinen}
Tapani Hyttinen and Gianluca Paolini.
\newblock {\em Beyond Abstract Elementary Classes: On The Model Theory of Geometric Lattices}.
\newblock Ann. Pure Appl. Logic {\bf 169} (2018), no. 2, 117-145.

\bibitem{proj_embeddings}
Franz Kalhoff.
\newblock {\em On Projective Embeddings of Partial Planes and Rank-Three Matroids}.
\newblock Discrete Math. {\bf 163} (1997), no. 1-3, 67-79.

\bibitem{kechris}
Alexander S. Kechris and Christian Rosendal.
\newblock {\em Turbulence, Amalgamation, and Generic Automorphisms of Homogeneous Structures}.
\newblock Proc. Lond. Math. Soc. {\bf 94} (2007), no. 2, 302-350.

\bibitem{kung}
Joseph P. S. Kung.
\newblock {\em A Source Book in Matroid Theory}.
\newblock Birkh\"auser Boston, Inc., Boston, MA, 1986.

\bibitem{lach1}
Alistair H. Lachlan and Robert E. Woodrow.
\newblock {\em Countable Ultrahomogeneous Undirected Graphs}.
\newblock Trans. Amer. Math. Soc. {\bf 262} (1980), no. 1, 51–94.

\bibitem{lach2}
Alistair H. Lachlan.
\newblock {\em On Countable Stable Structures which are Homogeneous for a Finite Relational Language}.
\newblock Israel J. Math. {\bf 49} (1984), no. 1-3, 69-153.

\bibitem{projective}
E. Mendelsohn.
\newblock {\em Every Group is the Collineation Group of some Projective Plane}.
\newblock J. Geometry {\bf 2} (1972), 97-106.

\bibitem{muller}
Isabel M\"uller.
\newblock {\em Fra\"iss\'e Structures with Universal Automorphism Groups}.
\newblock J. Algebra {\bf 463} (2016), 134-151. 

\bibitem{oxley}
James G. Oxley.
\newblock {\em Matroid Theory}.
\newblock Oxford University Press, 1992.



\bibitem{rado}
Richard Rado.
\newblock {\em Universal Graphs and Universal Functions}.
\newblock Acta Arith. {\bf 9} (1964), no. 4, 331-340.

\bibitem{rota}
Henry~H. Crapo and Gian-Carlo Rota.
\newblock {\em On the Foundations of Combinatorial Theory: Combinatorial Geometries}.
\newblock M.I.T. Press, Cambridge, Mass, 1970.

\bibitem{ziegler_book}
Martin Ziegler and Katrin Tent.
\newblock {\em A Course in Model Theory}.
\newblock Lecture Notes in Logic, 40. Association for Symbolic Logic, La Jolla, CA; Cambridge University Press, Cambridge, 2012.

\bibitem{tent_ziegler}
Katrin Tent and Martin Ziegler.
\newblock {\em On the Isometry Group of the Urysohn Space}.
\newblock J. London Math. Soc. {\bf 87} (2013), no. 1, 289-303.

\end{thebibliography}
\end{document}